\documentclass[11pt,reqno]{amsart}

\usepackage[T1]{fontenc}
\usepackage{lmodern}
\usepackage{microtype}
\usepackage{mathtools}
\usepackage{amssymb}
\usepackage{xcolor}
\usepackage[colorlinks=true,linkcolor=blue!55!black,citecolor=blue!55!black,urlcolor=blue!55!black]{hyperref}
\usepackage[nameinlink,capitalise,noabbrev]{cleveref}

\newtheorem{theorem}{Theorem}[section]
\newtheorem{proposition}[theorem]{Proposition}
\newtheorem{lemma}[theorem]{Lemma}
\theoremstyle{remark}
\newtheorem{remark}[theorem]{Remark}

\DeclareMathOperator{\Aut}{Aut}
\DeclareMathOperator{\Syl}{Syl}
\DeclareMathOperator{\pr}{pr}
\DeclareMathOperator{\Stab}{Stab}
\newcommand{\gen}[1]{\langle #1\rangle}
\newcommand{\GammaLA}{\Gamma_{L,A}}

\title[Subgroups of coprime index]{Generation of finite groups from subgroups of coprime index}
\author{Richie Sater}
\address{Independent Researcher, United States}
\email{richiesater@gmail.com}
\urladdr{https://orcid.org/0009-0007-9051-8207}
\date{August 12, 2026}
\subjclass[2020]{20D10, 20D30, 20P05}
\keywords{finite group, generators, crown-based power, probabilistic generation, presentation rank}

\begin{document}

\begin{abstract}
Let $d(G)$ denote the least size of a generating set of a finite group $G$.
We prove that if $G$ has a family $\mathcal H$ of subgroups such that
$d(H)\leq d$ for every $H\in\mathcal H$ and
\[
  \gcd\{\lvert G:H\rvert:H\in\mathcal H\}=1,
\]
then $d(G)\leq d+1$.  This gives an affirmative answer to Kourovka
Problem~21.87.  The proof reduces a minimal counterexample to a critical
crown-based power with nonabelian socle.  An exact crown multiplicity formula
and a uniform lower bound for conditional generation give a lower bound for
the number of crown factors.  A subgroup containing a Sylow $2$-subgroup gives
the contradictory upper bound, via a pointwise centralizer estimate for Sylow
$2$-subgroups of finite simple groups.
\end{abstract}

\maketitle

\section{Introduction}

For a finite group $X$, write $d(X)$ for the minimum cardinality of a
generating set of $X$.  Kourovka Problem~21.87, proposed by Lucchini, asks
whether $d(G)\leq d+1$ whenever $G$ has a nonempty family of subgroups, each
generated by at most $d$ elements, whose indices have greatest common divisor
$1$ \cite[Problem 21.87, p.~177]{kourovka21}.  Kov\'acs and Sim proved this
for soluble groups
\cite[Theorem~2]{KovacsSim1991}; Lucchini subsequently proved the general
$d+2$ bound \cite{Lucchini2000}.  We prove the sharp general bound.

\begin{theorem}\label{thm:main}
Let $d$ be a nonnegative integer, let $G$ be a finite group, and suppose that
$G$ has a nonempty family $\mathcal H$ of subgroups satisfying
\[
 d(H)\leq d\quad(H\in\mathcal H),
 \qquad
 \gcd\{\lvert G:H\rvert:H\in\mathcal H\}=1.
\]
Then $d(G)\leq d+1$.
\end{theorem}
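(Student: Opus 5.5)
We argue by contradiction with a minimal counterexample $G$ (minimal order). Let $\mathcal H$ be the given family, so $d(G)\geq d+2$. The hypothesis passes to quotients: for $N\trianglelefteq G$ the images $HN/N$ satisfy $d(HN/N)\leq d$, and $\lvert G/N:HN/N\rvert$ divides $\lvert G:H\rvert$. So the gcd condition survives, and minimality gives $d(G/N)\leq d+1$ for every nontrivial $N\trianglelefteq G$. Hence $d(G/N)<d(G)$ for all such $N$. By the Dalla Volta--Lucchini theory of critical groups, $G$ is then a crown-based power $L_k$ of some monolithic primitive group $L$ with socle $A$, and $d(L_k)=d(G)\geq d+2>d(L_{k-1})$. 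If $A$ is abelian, the Kov\'acs--Sim argument applies: it rests on the module-theoretic formula for $d(L_k)$ and a Sylow-type counting over $\mathrm{End}(A)$. So we may assume $A$ is nonabelian, $A=S^t$ with $S$ simple.

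Next we turn $d(G)\geq d+2$ into a lower bound on $k$. Write $P_{L,A}(s)$ for the conditional probability that $s$ elements generating $L$ modulo $A$ generate $L$. The exact multiplicity formula is
\[
d(L_k)>s \iff k> \frac{\lvert \Aut\text{-class of generating } s\text{-tuples}\rvert}{\lvert C_{\Aut A}(L/A)\rvert}\cdot P_{L,A}(s),
\]
more precisely $k>\phi_{L,A}(s)/\lvert C_{\Aut A}(L/A)\rvert$, where $\phi_{L,A}(s)$ counts $s$-tuples generating $L$ that lift a fixed generating tuple of $L/A$. Take $s=d+1$ and note $d(L/A)\leq d(G/A^k)\leq d+1$. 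We then use a uniform lower bound $P_{L,A}(d+1)\geq c$ for nonabelian $A$, of the form $1-$(sum over maximal subgroups of $A$-type), giving $\phi_{L,A}(d+1)\geq c\lvert A\rvert^{d+1}$. This yields
\[
k\geq \frac{c\,\lvert A\rvert^{d+1}}{\lvert C_{\Aut A}(L/A)\rvert}.
\]

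For the upper bound, the gcd condition forces some $H\in\mathcal H$ to have odd index, so $H$ contains a Sylow $2$-subgroup $T$ of $G$. Project $H$ into the $k$ factors of $L_k\leq L^k$. Since $d(H)\leq d$, the map $H\to L_k$ yields at most $\lvert L\rvert^d$-type constraints on the coordinates. Concretely, $H\cap A^k$ contains $T\cap A^k=\prod_i (T\cap A_i)$, and each coordinate of the $d$ generators of $H$ is determined modulo the diagonal information. Distinct crown factors must be distinguished by $H$'s generators, up to the $\Aut$-action centralizing $L/A$, because otherwise $H$ lies in a diagonal that fails to contain $T$. Counting $d$-tuples in $L$ whose generated subgroup contains a Sylow $2$-subgroup of $A$ gives
\[
k\leq \frac{\lvert A\rvert^{d}\cdot \max_{x}\lvert C_{A}(x)\rvert\text{-type term}}{\lvert C_{\Aut A}(L/A)\rvert}.
\]
Here one needs the pointwise centralizer estimate: for a Sylow $2$-subgroup $R$ of finite simple $S$, some element of $R$ (or the action of $R$) has centralizer small relative to $\lvert S\rvert$, say $\lvert C_S(x)\rvert<c\lvert S\rvert$, uniformly via CFSG. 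This makes the upper bound strictly smaller than the lower bound, which is the desired contradiction.

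The main obstacle is the last step. It requires a CFSG case check (alternating, Lie type in even and odd characteristic, sporadic groups) of the centralizer bound for Sylow $2$-subgroups, with constants that beat the constant $c$ in the conditional generation bound. I would first try to prove both bounds with explicit constants, and verify small cases such as $A_5$ and $\mathrm{PSL}_2(q)$ directly.
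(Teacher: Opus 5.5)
Your outline follows the paper's architecture: minimal counterexample, critical crown-based power $L_k$, the exact multiplicity formula with a uniform lower bound on $P_{L,A}$, and the fact that the coordinate tuples $\mathbf h_1,\dots,\mathbf h_k$ of an odd-index $H$ lie in one fiber of size $|A|^d$ and in pairwise distinct $\Gamma_{L,A}$-orbits. The gap is that the estimate which closes the argument is misidentified and left unproved.

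The orbit of $\mathbf h_j$ has size $|\Gamma_{L,A}|/|\Stab(\mathbf h_j)|$. The stabilizer consists of automorphisms of $L$, trivial on $L/A$, that fix $H_j$ \emph{elementwise}, and hence fix a Sylow $2$-subgroup $R$ of $L$ elementwise. So the quantity to bound is the pointwise centralizer $C_{\Gamma_{L,A}}(R)$, which in general contains non-inner automorphisms. For example, take $L=A=A_6$, where $\Gamma_{L,A}=\Aut(A_6)$: conjugation by $(56)$ fixes a Sylow $2$-subgroup of $A_6$ elementwise. A bound on $|C_S(x)|$ or $|C_A(x)|$ for an element $x\in R$ does not control this, and is trivially at most $|S|/2$ anyway.

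Nor can the constant be left as ``some $c$''. The uniform input is Detomi--Lucchini's $P_{L,A}(m)\ge\frac12$, so you need exactly $|C_{\Gamma_{L,A}}(R)|\le|A|/2$. That gives $k\le|A|^{m}/(2|\Gamma_{L,A}|)$, and the $1+{}$ in $f_L(m)=1+P_{L,A}(m)|A|^m/|\Gamma_{L,A}|$ makes the lower bound strict.

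No CFSG case check is needed for this bound. By Guralnick's theorem, $S=\langle P,t\rangle$ with $t$ an involution. The group $D=C_{\Aut(S)}(P)$ preserves the class $t^S$, since $t$ is $S$-conjugate to an involution of $P$. It acts semiregularly on $t^S$, since an automorphism fixing $P$ and $t$ fixes $S$. Hence $|D|\le|t^S|\le|S|/2$. Automorphisms fixing $R\cap A=P_1\times\cdots\times P_n$ elementwise cannot permute the simple factors, and $\Gamma_{L,A}$ acts faithfully on $A$. Therefore $|C_{\Gamma_{L,A}}(R)|\le(|S|/2)^n\le|A|/2$.

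The abelian-socle case is also not covered by your citation. Kov\'acs--Sim is a theorem about soluble groups, and $L_k$ with abelian socle need not be soluble, since $L/A$ is arbitrary. The paper handles it differently:
Lucchini's augmentation-ideal theorem gives $d_G(I_G)\le d+1<d(G)$.
Roggenkamp's identity $d(G)=d_G(I_G)+\pr(G)$ then forces $\pr(G)>0$.
Gruenberg's result then gives $d(G)=d(G/N)$ for every soluble $N\unlhd G$.
So a minimal counterexample has no nontrivial soluble normal subgroup, and $A$ is nonabelian.

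A minor point: to apply the multiplicity formula and Detomi--Lucchini at $m=d+1$ you need $d(L)\le d+1$, not only $d(L/A)\le d+1$. This holds because $L$ is a proper quotient of $L_k$.
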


The bound is best possible already when $d=1$.  Indeed, in $S_3$ the cyclic
subgroups $H_2=\gen{(12)}$ and $H_3=\gen{(123)}$ have
$\lvert S_3:H_2\rvert=3$ and $\lvert S_3:H_3\rvert=2$, while $d(S_3)=2$.

The proof combines six published inputs.  Lucchini's augmentation-ideal
bound gives the required module-generator estimate
\cite[Theorem~2 and Proposition~2]{Lucchini1992}.  Roggenkamp's
presentation-rank identity then shows that any counterexample must have
positive presentation rank \cite[Theorem~2.1]{Roggenkamp1979}.  A result of
Gruenberg eliminates soluble normal subgroups from a minimal counterexample
\cite[p.~218]{Gruenberg1976}; we use the explicit formulation in
\cite[2.1, p.~209]{Lucchini1990}.  Dalla Volta and Lucchini characterize
finite groups that require more generators than any of their proper
quotients as critical crown-based powers and determine the exact critical
multiplicity
\cite[Theorems~1.4 and 2.7]{DallaVoltaLucchini1998}.  Finally, Detomi and
Lucchini prove that the relevant conditional generation probability is at
least $1/2$ \cite{DetomiLucchini2013}.  The new ingredient is to use the
Sylow $2$-subgroup supplied by the index hypothesis to pack distinct
automorphism orbits inside one fiber.  Guralnick's theorem that a finite
nonabelian simple group is generated by a Sylow $2$-subgroup and an
involution \cite{Guralnick1986} supplies exactly the matching factor $1/2$.
The form of Guralnick's theorem used here is also stated explicitly in
\cite[Introduction]{BurnessGuralnick2024}.

The published inputs from Dalla Volta--Lucchini, Detomi--Lucchini, and
Guralnick depend on the classification of finite simple groups.  No
computational classification or finite enumeration is used in the proof.

\section{Preliminaries}

\subsection{The index hypothesis}

\begin{lemma}[Quotient inheritance]\label{lem:quotient}
If $N\unlhd G$, then $G/N$ satisfies the hypothesis of
\cref{thm:main} with the same value of $d$.
\end{lemma}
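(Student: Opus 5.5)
The plan is to use the image family $\mathcal H_N=\{HN/N : H\in\mathcal H\}$ in $G/N$, and to check the two conditions of \cref{thm:main} for it separately. The family is nonempty because $\mathcal H$ is.

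For the generator bound, I use the isomorphism $HN/N\cong H/(H\cap N)$. So $HN/N$ is a quotient of $H$. If $h_1,\dots,h_k$ generate $H$ with $k=d(H)\le d$, then their images $h_iN$ generate $HN/N$. Hence $d(HN/N)\le d(H)\le d$ for every $H\in\mathcal H$. This still holds when $d=0$: then $H=1$, so $HN/N$ is trivial.

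For the index condition, I compare indices through the tower $H\le HN\le G$. Since $N\unlhd G$, the set $HN$ is a subgroup of $G$ containing $N$. The correspondence theorem gives
\[
\lvert G/N : HN/N\rvert=\lvert G:HN\rvert,
\]
and this number divides $\lvert G:HN\rvert\,\lvert HN:H\rvert=\lvert G:H\rvert$. Now let $p$ be any prime. By the gcd hypothesis there is some $H\in\mathcal H$ with $p\nmid\lvert G:H\rvert$. Then $p$ also fails to divide the divisor $\lvert G/N:HN/N\rvert$. So no prime divides all the indices of members of $\mathcal H_N$, and their gcd is $1$.

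No step is a real obstacle. The only point needing care is that the gcd condition passes to divisors: if each index in the family is replaced by one of its divisors, the gcd of the new family still divides the original gcd, which is $1$. It is cleanest to state this prime by prime, as above.
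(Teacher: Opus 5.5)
Your proposal is correct and takes the same route as the paper's proof. You pass to the image family $\{HN/N : H\in\mathcal H\}$, bound $d(HN/N)\le d(H)$, and use that $\lvert G/N:HN/N\rvert=\lvert G:HN\rvert$ divides $\lvert G:H\rvert$; your prime-by-prime gcd step is equivalent to the paper's remark that every common divisor of the image indices divides every original index.
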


\begin{proof}
For $H\in\mathcal H$, the subgroup $HN/N$ is generated by at most $d$
elements and
\[
 \lvert G/N:HN/N\rvert=\lvert G:HN\rvert
 \quad\text{divides}\quad \lvert G:H\rvert.
\]
Consequently every common divisor of the image indices divides every
original index, and hence the image indices have greatest common divisor
one.
\end{proof}

\begin{lemma}[Sylow coverage]\label{lem:sylow-coverage}
The index gcd is one if and only if, for every prime $p$ dividing
$\lvert G\rvert$, some $H\in\mathcal H$ contains a Sylow $p$-subgroup of
$G$.
\end{lemma}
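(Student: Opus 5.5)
The plan is to prove both directions by localizing the gcd condition at each prime $p$ dividing $\lvert G\rvert$. The key observation is that, for a subgroup $H\leq G$, the prime $p$ fails to divide $\lvert G:H\rvert$ exactly when $H$ contains a Sylow $p$-subgroup of $G$. To check this, write $\lvert G\rvert=p^a m$ with $p\nmid m$. Then $p\nmid\lvert G:H\rvert$ holds if and only if the $p$-part of $\lvert H\rvert$ equals $p^a$. In that case a Sylow $p$-subgroup of $H$ has order $p^a$, so it is a Sylow $p$-subgroup of $G$ contained in $H$. Conversely, if $P\leq H$ with $P\in\Syl_p(G)$, then Lagrange gives $p^a\mid\lvert H\rvert$, so $p\nmid\lvert G:H\rvert$.

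With this equivalence the lemma follows directly. Every index $\lvert G:H\rvert$ divides $\lvert G\rvert$. Hence the gcd of the indices is $1$ if and only if no prime $p$ dividing $\lvert G\rvert$ divides all of the indices. Primes not dividing $\lvert G\rvert$ cannot divide any index, so they impose no condition. For a prime $p$ dividing $\lvert G\rvert$, the condition ``$p$ does not divide every index'' means that some $H\in\mathcal H$ has $p\nmid\lvert G:H\rvert$. By the observation, this means some $H\in\mathcal H$ contains a Sylow $p$-subgroup of $G$.

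There is one degenerate case to note. If $G=1$, no prime divides $\lvert G\rvert$. The gcd is then $\gcd\{1\}=1$, because $\mathcal H$ is nonempty, and the Sylow condition holds vacuously, so the equivalence still stands. I expect no real obstacle here. The only point needing care is the Sylow existence step, namely that the $p$-part of $\lvert H\rvert$ being $p^a$ yields a Sylow $p$-subgroup of $G$ inside $H$, and this is a direct application of Sylow's theorem in $H$.
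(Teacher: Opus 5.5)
Your proof is correct and follows essentially the paper's route: both localize at each prime $p\mid\lvert G\rvert$ and use that $p\nmid\lvert G:H\rvert$ exactly when $H$ contains a Sylow $p$-subgroup of $G$ (Sylow's theorem in $H$ for one direction, Lagrange for the other), with the paper phrasing the converse as a contrapositive. Your version additionally spells out the Lagrange direction and the $G=1$ case, which needs $\mathcal H$ nonempty; the paper leaves both implicit.
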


\begin{proof}
There is an $H\in\mathcal H$ for which
$p\nmid\lvert G:H\rvert$.  Thus the $p$-part of $\lvert H\rvert$ equals the
$p$-part of $\lvert G\rvert$, so every Sylow $p$-subgroup of $H$ is a Sylow
$p$-subgroup of $G$.  Conversely, if the index gcd were greater than one,
some prime $p\mid\lvert G\rvert$ would divide every index.  No member of
$\mathcal H$ could then contain a Sylow $p$-subgroup of $G$.
\end{proof}

Because all indices divide $\lvert G\rvert$, the gcd of an arbitrary family
of indices is already attained by a finite subfamily.  Thus no issue of an
infinite gcd is hidden in the statement, and the published augmentation-ideal
theorem may be applied to such a finite subfamily.

\begin{lemma}[Sylow transport]\label{lem:sylow-transport}
Let $P\in\Syl_p(X)$.
\begin{enumerate}
\item If $N\unlhd X$, then $P\cap N\in\Syl_p(N)$.
\item If $\pi:X\to Y$ is surjective, then $\pi(P)\in\Syl_p(Y)$.
\end{enumerate}
\end{lemma}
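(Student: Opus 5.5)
Both parts are standard order computations, and I will prove each by comparing $p$-parts of orders using the product formula and the second and third isomorphism theorems. For a finite group $Z$ write $|Z|_p$ for the $p$-part of $|Z|$. Throughout, $P$ is a $p$-group, and a $p$-subgroup of a group $Z$ is Sylow exactly when its order equals $|Z|_p$.

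For (1), since $N\unlhd X$, the set $PN$ is a subgroup of $X$ and
\[
 \lvert PN\rvert=\frac{\lvert P\rvert\,\lvert N\rvert}{\lvert P\cap N\rvert}.
\]
Because $P\leq PN\leq X$ and $P$ is Sylow in $X$, the index $\lvert X:PN\rvert$ is prime to $p$. Hence $\lvert PN:P\rvert=\lvert N\rvert/\lvert P\cap N\rvert$ is also prime to $p$, as it divides $\lvert X:P\rvert$. Since $P\cap N$ is a $p$-subgroup of $N$, this gives $\lvert P\cap N\rvert=\lvert N\rvert_p$, so $P\cap N\in\Syl_p(N)$.

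For (2), put $K=\ker\pi$, so that $Y\cong X/K$ and $\pi(P)\cong PK/K\cong P/(P\cap K)$. In particular $\pi(P)$ is a $p$-group. Its index satisfies
\[
 \lvert Y:\pi(P)\rvert=\lvert X/K:PK/K\rvert=\lvert X:PK\rvert,
\]
and $\lvert X:PK\rvert$ divides $\lvert X:P\rvert$, which is prime to $p$. So $\pi(P)$ is a $p$-subgroup of $Y$ of index prime to $p$, hence Sylow. This is the same divisibility already used in \cref{lem:quotient}.

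There is no real obstacle here. The only step needing care is in (1): the index $\lvert PN:P\rvert$ must be identified with $\lvert N:P\cap N\rvert$ through the product formula before the coprimality is transferred. Equivalently, one can note that $N/(P\cap N)$ is in bijection with $PN/P$.
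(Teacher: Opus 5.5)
Your proof is correct, and it differs from the paper's only in part (2); the two are equally complete. For part (1), the paper gives no argument. It just cites the normal-subgroup part of the Sylow theorems, so your product-formula computation simply writes out the standard proof. Your step is that $\lvert PN:P\rvert=\lvert N:P\cap N\rvert$ divides $\lvert X:P\rvert$.

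For part (2), the paper derives the result from part (1). It applies (1) to $\ker\pi$ to get $\lvert P\cap\ker\pi\rvert=\lvert\ker\pi\rvert_p$, and then counts orders:
$\lvert\pi(P)\rvert=\lvert P\rvert/\lvert P\cap\ker\pi\rvert=\lvert X\rvert_p/\lvert\ker\pi\rvert_p=\lvert Y\rvert_p$.
You bypass (1) and argue through the index instead: $\lvert Y:\pi(P)\rvert=\lvert X:PK\rvert$, which divides $\lvert X:P\rvert$ and is therefore prime to $p$.

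Your route makes the two parts logically independent. It also rests on the same divisibility $\lvert X:PK\rvert\mid\lvert X:P\rvert$ that the paper already uses in the quotient-inheritance lemma. The paper's route is a one-line order count that reuses (1).
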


\begin{proof}
The first assertion is the standard normal-subgroup part of the Sylow
theorems.  Applying it to $\ker\pi$ gives
\[
 |\pi(P)|=\frac{|P|}{|P\cap\ker\pi|}
          =\frac{|X|_p}{|\ker\pi|_p}=|Y|_p,
\]
which proves the second assertion.
\end{proof}

\subsection{Augmentation ideals and critical crowns}

Let $I_X$ denote the augmentation ideal of $\mathbb Z X$, and let
$d_X(I_X)$ be its minimum number of generators as a $\mathbb Z X$-module.
If $X$ is generated by $r$ elements, then $I_X$ is generated as a module by
the corresponding $r$ elements of the form $1-x$.  Lucchini's theorem
therefore yields the following consequence.

\begin{proposition}[Lucchini]\label{prop:augmentation}
Under the hypotheses of \cref{thm:main},
\[
 d_G(I_G)\leq d+1.
\]
\end{proposition}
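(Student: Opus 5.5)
The plan is to deduce the statement from Lucchini's augmentation-ideal theorem \cite[Theorem~2 and Proposition~2]{Lucchini1992}. That theorem says: if a finite group $X$ has finitely many subgroups $H_1,\dots,H_k$ with $\gcd_i\lvert X:H_i\rvert=1$, then
\[
 d_X(I_X)\leq \max_i d_{H_i}(I_{H_i})+1 .
\]
The underlying mechanism is a coprime-index averaging of relative augmentation modules. The submodule $I_{H_i}\mathbb ZX$ of $I_X$ is the image of induction from $H_i$. Transfer and corestriction give $\mathbb ZX$-maps whose composites are multiplication by $\lvert X:H_i\rvert$. A Bézout combination of the indices then shows that $I_X$ is generated by the images of generators of the $I_{H_i}$, together with one extra element accounting for $\mathbb Z X/\sum_i I_{H_i}\mathbb ZX$. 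I will cite this result rather than reprove it.

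First I reduce to a finite family. As remarked after \cref{lem:sylow-coverage}, every index divides $\lvert G\rvert$, so the gcd over $\mathcal H$ is attained by some finite subfamily $\{H_1,\dots,H_k\}\subseteq\mathcal H$. Concretely, for each prime $p\mid\lvert G\rvert$ choose $H_p\in\mathcal H$ with $p\nmid\lvert G:H_p\rvert$. These finitely many subgroups still have index gcd one. If $G=1$ the claim is trivial, since then $I_G=0$.

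Second I bound each $d_{H_i}(I_{H_i})$. By hypothesis $H_i=\gen{h_1,\dots,h_r}$ with $r\leq d$. The elements $1-h_1,\dots,1-h_r$ generate $I_{H_i}$ as a left $\mathbb ZH_i$-module, by the identities
\[
 1-xy=(1-x)+x(1-y),\qquad 1-x^{-1}=-x^{-1}(1-x).
\]
These identities show that the $\mathbb ZH_i$-span of $\{1-h_j\}$ contains $1-h$ for every $h\in H_i$. Since $I_{H_i}$ is spanned over $\mathbb Z$ by the elements $1-h$, this gives $d_{H_i}(I_{H_i})\leq d$.

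Finally I apply Lucchini's theorem to $X=G$ and the subfamily $\{H_1,\dots,H_k\}$, which yields $d_G(I_G)\leq d+1$. The main point of care, rather than a genuine obstacle, is matching the hypotheses of the cited theorem precisely. It must be the version for an arbitrary finite family with coprime indices, and not only for a pair of subgroups. If only the pair version were available, I would try the following. Since $\gcd_i\lvert G:H_i\rvert=1$ and each $\lvert G:H_i\rvert$ divides $\lvert G\rvert$, I would look for two subgroups $H,K\in\mathcal H$ with $\gcd(\lvert G:H\rvert,\lvert G:K\rvert)=1$. Such a pair need not exist in general, so instead I would iterate the Bézout/transfer argument. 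The argument only uses an integer relation $\sum_i a_i\lvert G:H_i\rvert=1$, which works equally well for $k$ terms. I expect the cited Proposition~2 to be stated in exactly this generality.
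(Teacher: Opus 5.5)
Your proposal is correct and follows the paper's route: pass to a finite subfamily whose indices have gcd one, note that $d(H)\leq d$ gives $d_H(I_H)\leq d$ via the elements $1-h_j$, and invoke Lucchini's theorem \cite[Theorem~2 and Proposition~2]{Lucchini1992}. One caution: your Bézout/transfer account of why that theorem holds would, taken literally, bound $d_G(I_G)$ only by the \emph{sum} of the $d_{H_i}(I_{H_i})$, not by their maximum plus one; indeed $I_G=\sum_i\mathbb ZG\,I_{H_i}$ already holds, because subgroups whose indices have gcd one generate $G$, so do not present that sketch as the mechanism, although this does not affect your argument since you cite the theorem rather than reprove it.
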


This is \cite[Theorem~2 and Proposition~2]{Lucchini1992}.

\begin{proposition}[Roggenkamp]\label{prop:presentation-rank}
For every finite group $X$,
\[
 d(X)=d_X(I_X)+\pr(X),
\]
where $\pr(X)$ is the presentation rank of $X$.
\end{proposition}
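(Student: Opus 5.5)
The plan is to treat \cref{prop:presentation-rank} as the published identity \cite[Theorem~2.1]{Roggenkamp1979} and to show why it holds once the presentation rank is defined through relation modules. Fix a free presentation $1\to R\to F\to X\to 1$ with $F$ free of rank $n$, and write $\bar R=R/[R,R]$ for the relation module, viewed as a $\mathbb Z X$-module. The basic tool is the Gaschütz--Lyndon relation sequence
\[
 0\to \bar R\to (\mathbb Z X)^n\to I_X\to 0,
\]
where the basis element $e_i$ is sent to $x_i-1$, the $x_i$ being the images of a free basis of $F$. Following Gruenberg, I would define $\pr(X)$ by comparing $d_X(\bar R)$ with the number of generators of the trivial module and of free summands. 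Equivalently, $\pr(X)$ measures how far the lattice $\bar R$ is from being generated as economically as the stable class of $I_X$ allows. Schanuel's lemma applied to two such sequences shows that this quantity does not depend on the chosen presentation.

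The inequality $d_X(I_X)\le d(X)$ is immediate, since $1-x_1,\dots,1-x_r$ generate $I_X$ whenever $x_1,\dots,x_r$ generate $X$. For the reverse comparison, take $m=d_X(I_X)$ and a surjection $(\mathbb Z X)^m\to I_X$ with kernel $K$. Schanuel's lemma, applied to this sequence and the relation sequence, gives
\[
 K\oplus(\mathbb Z X)^n\cong \bar R\oplus(\mathbb Z X)^m .
\]
The task is then to realize module generators of $I_X$ by group generators. A set of $m$ module generators of $I_X$ comes from $m$ group elements exactly when it lifts to a free presentation of rank $m$. The obstruction to such a lift is precisely the failure of $K$ to look like a relation module, which is the defect $\pr(X)$.

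The main obstacle is the cancellation step. Schanuel's lemma only gives isomorphisms up to adding free summands, and these summands have to be cancelled in order to compare generator numbers exactly. Roggenkamp's contribution is a cancellation theorem for $\mathbb Z X$-lattices that contain enough free summands, based on Swan--Jacobinski cancellation in the genus. I would invoke it directly rather than reprove it. With cancellation available, the generator counts on the two sides differ by exactly $\pr(X)$, and this gives $d(X)=d_X(I_X)+\pr(X)$.
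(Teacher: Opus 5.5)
Your proposal rests on Roggenkamp's Theorem~2.1 \cite[Theorem~2.1]{Roggenkamp1979}, which is exactly how the paper handles this statement; the paper gives no argument beyond the citation. Your surrounding sketch is only heuristic: you never fix a definition of $\pr(X)$, and the decisive step (that the lifting obstruction, or the difference in generator counts after cancellation, equals $\pr(X)$) is asserted rather than derived, so the citation, not the sketch, is what establishes the statement.
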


Roggenkamp proves this in \cite[Theorem~2.1]{Roggenkamp1979}.
Lucchini also records the identity, with attribution to Roggenkamp, in
\cite[p.~146]{Lucchini1992}.

For later use, we record the soluble-normal-subgroup consequence of
Gruenberg's presentation-rank theory.

\begin{proposition}[Gruenberg]\label{prop:pr}
If $X$ is finite, $\pr(X)>0$, and $N\unlhd X$ is soluble, then
$d(X)=d(X/N)$.
\end{proposition}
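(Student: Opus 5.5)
This proposition is essentially a citation of Gruenberg's theory, so the proof I would write reduces the statement to the known structure of presentation rank via relation modules and the Roggenkamp identity of \cref{prop:presentation-rank}, proceeding by induction on $\lvert N\rvert$.

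\textbf{Reduction to minimal normal subgroups.} Since $N$ is soluble, if $N\neq 1$ it contains a minimal normal subgroup $M$ of $X$ with $M\leq N$, and $M$ is an elementary abelian $p$-group. The quotient $N/M$ is again soluble and normal in $X/M$. The plan is therefore:
\begin{enumerate}
\item prove $d(X)=d(X/M)$ for an abelian minimal normal subgroup $M$ whenever $\pr(X)>0$;
\item show that $\pr(X/M)>0$ still holds, so that the induction can continue in $X/M$.
\end{enumerate}
For the second point I would use Gruenberg's monotonicity result: for abelian $M$, presentation rank does not increase on passing to $X/M$, and $d_{X/M}(I_{X/M})\leq d_X(I_X)$. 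Combined with step (1) and Roggenkamp's identity, this should give
\[
\pr(X/M)=d(X)-d_{X/M}(I_{X/M})\geq d(X)-d_X(I_X)=\pr(X)>0.
\]

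\textbf{The base step.} For step (1), suppose $d(X)>d(X/M)$. By the standard lifting argument of Gasch\"utz, for abelian $M$ the only obstruction to lifting a generating $r$-tuple of $X/M$ is cohomological. It is measured by the number of complemented chief factors $X$-isomorphic to $M$ compared with $\dim_{\operatorname{End}_X(M)} H^1(X/C_X(M),M)$. That same count is exactly what governs $d_X(I_X)$: Gruenberg's formula expresses $d_X(I_X)$ as a maximum over irreducible modules of such quantities, together with $d(X/N')$-type contributions. So if the jump $d(X)>d(X/M)$ is caused by $M$, the module bound is already attained at $M$, which forces $d(X)=d_X(I_X)$ and hence $\pr(X)=0$. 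This contradicts the hypothesis.

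\textbf{Main obstacle.} The hard step is making precise that an abelian chief factor causing a jump in $d$ also forces $d_X(I_X)=d(X)$. This is the content of Gruenberg's analysis (and its formulation in \cite[2.1, p.~209]{Lucchini1990}), which identifies $\pr$ as detecting only nonabelian obstructions. Rather than rederive it, I would invoke \cite[p.~218]{Gruenberg1976} for this equality and use the induction above to pass from a minimal normal $M$ to an arbitrary soluble $N$.
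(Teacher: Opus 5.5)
Your proposal is correct and, like the paper, rests entirely on citing Gruenberg's theorem; the paper simply quotes it for soluble $N$ in Lucchini's formulation \cite[2.1, p.~209]{Lucchini1990}, so your induction over minimal normal subgroups is valid but unnecessary (its one extra input, $d_{X/M}(I_{X/M})\leq d_X(I_X)$, holds because $I_X$ maps onto $I_{X/M}$). One correction: the ``monotonicity'' you attribute to Gruenberg, that $\pr$ does not increase on passing to $X/M$, points the wrong way and is not needed, since your display derives $\pr(X/M)\geq\pr(X)$ from step~(1) and that inequality alone.
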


We use the formulation in \cite[2.1, p.~209]{Lucchini1990}, which cites
\cite[p.~218]{Gruenberg1976}.

We next fix the crown notation.  Let $L$ be a finite monolithic group with
socle $A$.  When $A$ is abelian, assume that $A$ is complemented in $L$.
For $k\geq 1$, define the crown-based power
\[
 L_k=\{(\ell_1,\ldots,\ell_k)\in L^k:
          \ell_1A=\cdots=\ell_kA\}.
\]
Set
\[
 \GammaLA=
 \{\alpha\in\Aut(L):\alpha(\ell)A=\ell A\text{ for every }\ell\in L\}.
\]
Thus $\GammaLA$ is the group of automorphisms of $L$ inducing the identity
on $L/A$.

Let $\phi_X(m)$ be the number of ordered generating $m$-tuples of $X$.
For $m\geq d(L)$, put
\[
 P_{L,A}(m)=
 \frac{\phi_L(m)}{|A|^m\phi_{L/A}(m)}.
\]
This is the probability that an $m$-tuple of lifts generates $L$, conditional
on its image generating $L/A$.

We use the following two published results in exactly this form.

\begin{proposition}[Critical crown formula]\label{prop:crown}
Let $m\geq2$, and suppose that $d(X/N)\leq m$ for every nontrivial normal
subgroup $N$ of a finite group $X$, while $d(X)>m$.  Then
$X\cong L_{f_L(m)}$ for a monolithic group $L$ as above.  If its socle $A$
is nonabelian, then
\begin{equation}\label{eq:crown-formula}
 f_L(m)=1+\frac{P_{L,A}(m)|A|^m}{|\GammaLA|}.
\end{equation}
\end{proposition}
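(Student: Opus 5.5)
The plan is to obtain \cref{prop:crown} from the two cited results of Dalla Volta and Lucchini \cite[Theorems~1.4 and 2.7]{DallaVoltaLucchini1998}. Below I sketch why each holds, so the exact form of \eqref{eq:crown-formula} can be checked. The argument has three steps: a structural reduction to a crown-based power, a lifting count via Gasch\"utz's lemma, and an orbit count under $\GammaLA$.

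\emph{Structural step.} Suppose $d(X)>m$ but $d(X/N)\leq m$ for every $1\neq N\unlhd X$. I would invoke \cite[Theorem~1.4]{DallaVoltaLucchini1998}. It says such an $X$ is a crown-based power $L_k$ of a monolithic group $L$ with socle $A$, where $A$ is complemented in $L$ when it is abelian. The idea behind it is as follows. Pick a minimal normal subgroup $N$ and study its crown, i.e.\ the chief factors $X$-equivalent to $N$. Minimality of $X$ with respect to $d$ forces the intersection $R$ of the kernels of the associated monolithic primitive quotients to be trivial. Hence $X$ embeds in a fibred product of copies of $L$ over $L/A$, and that fibred product is exactly $L_k$.

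\emph{Counting step.} Now assume $A$ is nonabelian. Since $L_k/\mathrm{soc}(L_k)\cong L/A$ is a proper quotient, $d(L/A)\leq m$. Fix a generating $m$-tuple $\bar g$ of $L/A$. By Gasch\"utz's lemma, the number of generating $m$-tuples of $L$ lying over $\bar g$ is independent of $\bar g$, and it equals $P_{L,A}(m)\lvert A\rvert^m$ by the definition of $P_{L,A}$. Likewise, $L_k$ is $m$-generated iff some $m$-tuple over $\bar g$ (diagonally) generates $L_k$.

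Such a tuple is a $k$-tuple $(x^{(1)},\dots,x^{(k)})$ of generating $m$-tuples of $L$, each lying over $\bar g$. The key lemma, in the nonabelian case, is that this tuple generates $L_k$ iff no two coordinates $x^{(i)},x^{(j)}$ lie in the same $\GammaLA$-orbit. One direction is clear: if $x^{(j)}=\alpha(x^{(i)})$ with $\alpha\in\GammaLA$, the generated subgroup lies in the graph-type subgroup $\{\ell_j=\alpha(\ell_i)\}$, which is proper. For the converse, let $H$ be the generated subgroup. It projects onto each coordinate and maps onto $L/A$, so $H\cap A^k$ is normal in $H$. Because $A$ is a nonabelian minimal normal subgroup of $L$, a Goursat/diagonal argument shows $H\cap A^k$ is a product of full factors and diagonals. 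Any diagonal gluing coordinates $i$ and $j$ yields an isomorphism $L\to L$ carrying $x^{(i)}$ to $x^{(j)}$ and inducing the identity on $L/A$, which is an element of $\GammaLA$.

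\emph{Orbit count.} $\GammaLA$ acts freely on generating $m$-tuples of $L$, since an automorphism fixing a generating set is trivial. It also preserves the fibre over $\bar g$. So the number of orbits is $P_{L,A}(m)\lvert A\rvert^m/\lvert\GammaLA\rvert$, and $L_k$ is $m$-generated iff $k$ is at most this number. Finally, $L_{k-1}$ is a quotient of $L_k=X$ by a nontrivial normal subgroup, so it is $m$-generated while $L_k$ is not. Hence $k$ is exactly one more than the orbit count, which is \eqref{eq:crown-formula}.

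I expect the main obstacle to be the structural step, which is where CFSG-dependent input enters (through bounds on $\lvert\GammaLA\rvert$ or on generation of $L$). In the paper I would cite that step directly rather than reprove it.
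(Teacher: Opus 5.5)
Your proposal is correct and follows the paper's route. The paper simply cites \cite[Theorems~1.4 and 2.7]{DallaVoltaLucchini1998} and rewrites the nonabelian branch $1+\phi_L(m)/(|\GammaLA|\,\phi_{L/A}(m))$ using $\phi_L(m)/\phi_{L/A}(m)=P_{L,A}(m)|A|^m$, which is your identification of $P_{L,A}(m)|A|^m$ as the number of generating lifts of a fixed generating tuple of $L/A$.

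Your extra sketch of the counting behind Theorem~2.7 (Gasch\"utz's lemma, the graph-subgroup criterion, free $\GammaLA$-orbits on the fibre) is the standard argument and is sound in outline. A complete write-up would still need the following:
\begin{enumerate}
\item $H\cap A^k$ projects onto $A$ in every coordinate.
\item Minimality of $A$ in $L$ forces a partial diagonal between two coordinates to be the graph of an isomorphism $A\to A$.
\item The projection of $H$ to those two coordinates is then the graph of an element of $\GammaLA$, because $C_L(A)=1$.
\item $d(L)\leq m$, so that $P_{L,A}(m)$ is defined and $k\geq2$. This relies on $d(L)\leq\max(2,d(L/A))$.
\end{enumerate}
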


This is \cite[Theorems~1.4 and 2.7]{DallaVoltaLucchini1998}, with the
nonabelian branch of the latter normalized as follows.

In the notation of that source, the nonabelian-socle branch is
\[
 f_L(m)=1+\frac{\phi_L(m)}
 {|{\GammaLA}|\,\phi_{L/A}(m)}.
\]
Substitution of the preceding definition of $P_{L,A}(m)$ gives
\eqref{eq:crown-formula}; thus no probabilistic normalization is implicit in
the quoted formula.

\begin{proposition}[Detomi--Lucchini]\label{prop:probability}
If $L$ is a finite monolithic group with socle $A$ and $m\geq d(L)$, then
\begin{equation}\label{eq:half}
 P_{L,A}(m)\geq\frac12.
\end{equation}
\end{proposition}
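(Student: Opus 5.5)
This is a published result, so my plan is mainly to verify that the quoted form matches the source, and only secondarily to reconstruct its proof. Detomi and Lucchini define the conditional probability that $m$ random lifts generate $L$, given that their images generate $L/A$. Counting lifts, this is exactly $\phi_L(m)/(|A|^m\phi_{L/A}(m))$. Each generating $m$-tuple of $L/A$ has exactly $|A|^m$ lifts, and a tuple generating $L$ projects to a tuple generating $L/A$. So the quantity $P_{L,A}(m)$ defined above coincides with theirs. The hypothesis $m\geq d(L)$ ensures $\phi_L(m)>0$, which also guarantees $\phi_{L/A}(m)>0$, so the ratio is defined. With this identification, \cref{prop:probability} is a direct citation of \cite{DetomiLucchini2013}.

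If I had to reconstruct the argument, I would fix a generating tuple $(x_1A,\ldots,x_mA)$ of $L/A$ and study the lifts $(x_1a_1,\ldots,x_ma_m)$ with $a_i\in A$. Such a lift fails to generate $L$ exactly when it lies in a maximal subgroup $M$ with $MA=L$, that is, in a supplement of the socle. I would split into two cases according to the socle.

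\emph{Abelian socle.} Here the supplements are complements of $A$ in $L$. The number of complements is $|A|/|C_A(L)|$ times $|H^1(L/A,A)|$, and $C_A(L)=1$ when $A$ is a nontrivial monolith with nontrivial action. A fixed complement contains exactly one lift of the given tuple. So a union bound gives a failure probability of at most the number of complements divided by $|A|^m$. Standard bounds on $|H^1|$, together with $m\geq d(L)$ (where the case $m=d(L)$ is exactly when complements can matter), then give failure probability at most $1/2$. The worst case is of the type $L=C_p\rtimes C_{p-1}$, or $A$ a trivial $C_2$-module.

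\emph{Nonabelian socle.} Write $A=S^n$. The maximal supplements are of product or diagonal type, and they can be counted via conjugacy classes of maximal subgroups of $S$ and the automorphism-orbit data. The failure probability is then bounded by a sum over these classes of $|L:M|^{1-m}$-type terms.

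The main obstacle is this nonabelian case at the boundary $m=d(L)$, where $m$ may be $2$. Making the sum genuinely at most $1/2$ requires classification-dependent estimates. These include bounds on the number of maximal subgroups of almost simple groups of each index, of the form $\sum_M|S:M|^{-s}\to0$ in the style of Liebeck--Shalev, and the fact that $d(L)\geq2$ forces the diagonal contributions to be small. For this step I would not attempt an independent proof; I would rely on the Detomi--Lucchini argument, having checked above that their normalization agrees with ours.
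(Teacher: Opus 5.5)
Your plan matches the paper's, which proves this proposition purely by citing Detomi--Lucchini. Your check that $\phi_L(m)/(|A|^m\phi_{L/A}(m))$ is exactly their conditional probability, and that $m\geq d(L)$ makes it well defined, is the only verification needed.

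One correction to your optional abelian-socle sketch. A size bound on $|H^1(L/A,A)|$ does not do the work: even $|H^1(L/A,A)|<|A|$ only gives failure probability $<1$ at $m=2$. What works is that the count is exact, since each failing lift lies in exactly one complement. So for nontrivial action the failure probability is $|H^1(L/A,A)|/|A|^{m-1}$. This is a power of $q=|\mathrm{End}_L(A)|$, and it is $<1$ because $\phi_L(m)>0$, hence it is at most $1/q\leq 1/2$. For the same reason, $C_p\rtimes C_{p-1}$, which has $P_{L,A}(m)=1-1/p$, is not an extremal case.
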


This is the main inequality of \cite{DetomiLucchini2013}.

The notation in \cref{prop:crown} is chosen so that $f_L(m)$ is the first
crown multiplicity at which more than $m$ generators are needed.  In
particular, $f_L(m)\geq2$, and $L$ is a proper quotient of
$L_{f_L(m)}$.

\section{Pointwise Sylow centralizers}

For $Y\leq X$, write $C_{\Aut(X)}(Y)$ for the subgroup of automorphisms of
$X$ that fix every element of $Y$.

\begin{lemma}[Simple socle factor]\label{lem:simple-centralizer}
Let $S$ be a finite nonabelian simple group and let
$P\in\Syl_2(S)$.  Then
\begin{equation}\label{eq:simple-centralizer}
 |C_{\Aut(S)}(P)|\leq \frac{|S|}{2}.
\end{equation}
\end{lemma}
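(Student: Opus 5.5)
The plan is to use Guralnick's theorem \cite{Guralnick1986}: there is an involution $t\in S$ with $S=\gen{P,t}$. An automorphism $\alpha\in C_{\Aut(S)}(P)$ fixes $P$ pointwise, so it is determined by $\alpha(t)$. Indeed, if $\alpha(t)=\beta(t)$ for $\alpha,\beta\in C_{\Aut(S)}(P)$, then $\beta^{-1}\alpha$ fixes $P$ and $t$ pointwise, hence fixes $\gen{P,t}=S$, so $\alpha=\beta$. Thus the map $\alpha\mapsto\alpha(t)$ is injective from $C_{\Aut(S)}(P)$ into the set of elements of $S$ conjugate to $t$ under $\Aut(S)$. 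In particular it lands in the set $\mathcal I(S)$ of involutions of $S$, so $|C_{\Aut(S)}(P)|\le|\mathcal I(S)|$.

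It then remains to show $|\mathcal I(S)|\le|S|/2$. Here I would use the standard count: the number of elements $x$ with $x^2=1$ is at most $|S|$, with equality only if $S$ is an elementary abelian $2$-group. A sharper route is the Frobenius–Schur count $\#\{x:x^2=1\}=\sum_\chi \nu(\chi)\chi(1)\le\sum_\chi\chi(1)$. Simpler still: involutions together with $1$ number fewer than $|S|$, and we need a factor of $2$.

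Cleanest is the following. Every involution lies in some Sylow $2$-subgroup. Alternatively, partition by cosets: if $x$ and $xy$ are both involutions for a fixed $y$, nothing is forced. So instead I would bound via a subgroup. Take $s\in S$ of odd prime order $r$ (which exists, by Burnside/Feit–Thompson, since $S$ is not a $2$-group). The map $x\mapsto xs$ sends involutions to elements that are not involutions unless $(xs)^2=1$, i.e.\ $x s x=s^{-1}$. This is messy, so I fall back on the Frobenius–Schur bound. We have $|\mathcal I(S)|+1=\sum_\chi\nu(\chi)\chi(1)\le\sum_\chi\chi(1)$. Also $\sum_\chi\chi(1)\le\sqrt{k(S)|S|}$ by Cauchy–Schwarz, where $k(S)$ is the class number. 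Since $k(S)\le|S|/4$ for nonabelian simple groups (by Gustafson, the commuting probability satisfies $k/|S|\le 5/8$, and for simple groups it is much smaller, at most $1/12$, attained by $A_5$), this gives $|\mathcal I(S)|<|S|/\sqrt{12}<|S|/2$.

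The main obstacle is justifying $k(S)/|S|\le1/4$ without enumeration. A direct argument: if $k(S)>|S|/4$, then the commuting probability exceeds $1/4$, and Gustafson-type results force $S/Z(S)$ to be small or $S$ to be nilpotent or soluble (Neumann-type results show that $k/|S|>3/40$ implies solubility or $S\cong A_5$ times abelian). For $A_5$ the bound holds by hand: it has $15$ involutions and $15\le30$. I would try to cite Guralnick–Robinson for the fact that $k(S)/|S|\le 1/12$ for nonsoluble $S$, which closes the argument.
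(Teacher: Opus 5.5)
Your argument is correct. Its first half, injectivity of $\alpha\mapsto\alpha(t)$ on $D=C_{\Aut(S)}(P)$ via $S=\gen{P,t}$, is exactly the paper's. The two routes diverge in where the image of $t$ is placed.

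The paper notes that $t$ is $S$-conjugate to an involution $u\in P$. Every $\alpha\in D$ fixes $u$, so $\alpha$ preserves $u^S=t^S$. Hence the $D$-orbit of $t$ lies in the single class $t^S$, and $|D|\le|S:C_S(t)|\le|S|/2$ because $t\in C_S(t)$. This step is elementary once Guralnick's theorem is granted.

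You use only that automorphisms preserve order, so you must bound the total number $i(S)$ of involutions. You do this with Frobenius--Schur, Cauchy--Schwarz, and the bound $k(S)/|S|\le 1/12$ for nonsoluble groups. That chain is valid and even yields $|D|<|S|/\sqrt{12}$. Two attribution points: the $3/40$ threshold is Guralnick--Robinson, not a ``Neumann-type'' result, and Gustafson's $5/8$ does not suffice. More importantly, the chain imports a substantial theorem for a step the paper does in one line.

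The ``main obstacle'' you identify is an artifact of the Cauchy--Schwarz step, and it has an elementary fix. Since $S$ is perfect, the trivial character is its only linear character, so $\chi(1)\le\chi(1)^2/2$ for $\chi\neq 1$. Frobenius--Schur then gives $1+i(S)\le\sum_\chi\chi(1)\le 1+(|S|-1)/2$ directly.

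What each route buys: yours gives a bound independent of the particular involution $t$. The paper's gives the sharper bound $|S:C_S(t)|$ with no character theory.

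In a write-up, delete the abandoned attempts (the coset idea and the odd-order-element idea). Also make the Sylow-conjugation step explicit: Guralnick's theorem supplies some Sylow $2$-subgroup $P_0$ and involution $t_0$, which you conjugate so that the given $P$ appears.
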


\begin{proof}
By Guralnick's theorem \cite{Guralnick1986}, there are
$P_0\in\Syl_2(S)$ and an involution $t_0\in S$ such that
$S=\gen{P_0,t_0}$.  Sylow conjugacy gives $s\in S$ with $P_0^s=P$;
after conjugating the generating pair and setting $t=t_0^s$, we have
$S=\gen{P,t}$.  Put $D=C_{\Aut(S)}(P)$.  The $S$-conjugacy class
$t^S$ is $D$-invariant.  Indeed, $t$ is conjugate in $S$ to an involution
$u\in P$; every element of $D$ fixes $u$, so it preserves $u^S=t^S$.

The stabilizer $D_t$ is trivial: an element of $D_t$ fixes $P$ pointwise and
fixes $t$, hence fixes the generating group $\gen{P,t}=S$ pointwise.  The
$D$-orbit of $t$ therefore has size $|D|$ and lies in $t^S$.  Since
$\lvert C_S(t)\rvert\geq2$,
\[
 |D|\leq |t^S|=\lvert S:C_S(t)\rvert\leq |S|/2.
\]
\end{proof}

\begin{lemma}[Monolithic group]\label{lem:monolithic-centralizer}
Let $L$ be monolithic with nonabelian socle $A$, and let
$R\in\Syl_2(L)$.  Then
\begin{equation}\label{eq:monolithic-centralizer}
 |C_{\GammaLA}(R)|\leq |A|/2.
\end{equation}
\end{lemma}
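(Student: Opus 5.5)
The plan is to reduce to \cref{lem:simple-centralizer} by restricting automorphisms to one simple factor of the socle, and to control the rest of the socle using transitivity of $L$ on the factors. Write $A=S_1\times\cdots\times S_n$ with the $S_i\cong S$ nonabelian simple, permuted transitively by conjugation in $L$ (because $A$ is the unique minimal normal subgroup). Since $A$ is nonabelian and is the unique minimal normal subgroup, $C_L(A)=1$. Hence $L$ embeds in $\Aut(A)$, and any automorphism of $L$ is determined by its restriction to $A$. Indeed, if $\alpha$ is trivial on $A$, then for every $\ell\in L$ and $a\in A$,
\[
\alpha(\ell)\,a\,\alpha(\ell)^{-1}=\alpha(\ell a\ell^{-1})=\ell a\ell^{-1},
\]
so $\ell^{-1}\alpha(\ell)\in C_L(A)=1$. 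Thus restriction gives an injection of $C_{\GammaLA}(R)$ into $\Aut(A)$. Its image consists of automorphisms fixing $Q:=R\cap A$ pointwise, and $Q\in\Syl_2(A)$ by \cref{lem:sylow-transport}.

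Next I use that $\alpha\in\GammaLA$ induces the identity on $L/A$. For $\ell\in L$ we have $\alpha(\ell)=\ell a_\ell$ with $a_\ell\in A$, so $\alpha(S_i^\ell)=\alpha(S_i)^{\ell a_\ell}$. The permutation of the factors induced by $\alpha$ therefore commutes with the transitive action of $L$ on $\{S_1,\dots,S_n\}$. I also expect it to fix the factors meeting $Q$ nontrivially. Indeed $Q=\prod_i(Q\cap S_i)$ with each $Q\cap S_i\in\Syl_2(S_i)$ nontrivial, since $2\mid|S|$. Because $\alpha$ fixes $Q\cap S_i\neq1$ pointwise, $\alpha(S_i)\cap S_i\neq1$, which forces $\alpha(S_i)=S_i$. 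So every $\alpha$ in the centralizer normalizes each $S_i$. The restriction then lies in $\prod_i\Aut(S_i)$, and its $i$-th coordinate centralizes the Sylow $2$-subgroup $Q\cap S_i$ of $S_i$.

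The main step, and the one I expect to be the obstacle, is showing that the restriction to a single factor $S_1$ already determines $\alpha$. Here I use that $\alpha$ commutes with $L$ modulo inner automorphisms by elements of $A$. Given $\ell$ with $S_1^\ell=S_i$, the relation $\alpha(\ell)=\ell a_\ell$ expresses $\alpha|_{S_i}$ as the conjugate of $\alpha|_{S_1}$ by $\ell$, twisted by the inner automorphism induced by $a_\ell$ on $S_i$. I therefore first attempt to pin down $a_\ell$ modulo $C_A(S_i)$ from the known behaviour of $\alpha$ on $Q\cap S_i$, which is the identity there. Concretely, suppose $\alpha|_{S_1}=1$. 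Then for each $i$ the map $\alpha|_{S_i}$ is inner, given by conjugation by some $b_i\in S_i$, and it fixes the Sylow subgroup $Q\cap S_i$ pointwise. So $b_i$ lies in $C_{S_i}(Q\cap S_i)$, which is $Z(Q\cap S_i)$ by standard facts. If that route stalls, I fall back on a simpler argument. Set $K=\{\alpha\in C_{\GammaLA}(R):\alpha|_{S_1}=1\}$ and note that $K$ centralizes $S_1$ and $R$, hence centralizes $\gen{S_1,R}$. I then aim to show that the normal closure of $S_1$ under $R$ together with $A$-conjugation exhausts $A$. For this I choose the labelling so that $S_1$ is normalized by $R$ where possible, or use a $2$-element of $L$ that moves the factors transitively modulo the point stabilizer.

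Once injectivity into $C_{\Aut(S_1)}(Q\cap S_1)$ is established, \cref{lem:simple-centralizer} gives
\[
|C_{\GammaLA}(R)|\le |S|/2\le |A|/2,
\]
which is even stronger than \eqref{eq:monolithic-centralizer}. A weaker fallback also suffices. Bound the kernel of restriction to $S_1$ by $\prod_{i\ge2}|S_i|$, using only that the kernel acts on each remaining factor through inner automorphisms. Combined with the bound $|S|/2$ for the image, this gives exactly $|A|/2$, which is why I expect the intended proof to follow that cruder route.
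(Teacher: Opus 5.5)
Your advertised main step is false, and so is the ``even stronger'' bound $|C_{\GammaLA}(R)|\le |S|/2$. Take $L=S\wr C_3$, which is monolithic with socle $A=S_1\times S_2\times S_3$. Since $|L:A|=3$ is odd, $R=P_1\times P_2\times P_3$ with $P_i\in\Syl_2(S_i)$, so $R$ does not move the factors at all. For $1\neq z\in Z(P_2)$, conjugation by $z$ lies in $\GammaLA$ and fixes $R$ pointwise. It is trivial on $S_1$ but not on $S_2$. Hence restriction to $S_1$ is not injective on $C_{\GammaLA}(R)$. Relabelling does not help, and there is no $2$-element permuting the factors to use. For $S=A_5$, the inner automorphisms coming from $Z(P_1)\times Z(P_2)\times Z(P_3)$ are pairwise distinct because $Z(L)=1$. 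They give $64>30=|S|/2$ elements of $C_{\GammaLA}(R)$.

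Your proof therefore rests entirely on your final fallback, and that argument is correct. The image of $\alpha\mapsto\alpha|_{S_1}$ lies in $C_{\Aut(S_1)}(Q\cap S_1)$, which has order at most $|S|/2$ by \cref{lem:simple-centralizer}. For $\alpha$ in the kernel and $S_i=S_1^{\ell}$, the relation $\alpha(\ell)=\ell a_\ell$ shows that $\alpha|_{S_i}$ is conjugation by the $S_i$-component of $a_\ell$. So the kernel embeds in $\prod_{i\geq 2}\mathrm{Inn}(S_i)$, and the two bounds multiply to $|A|/2$.

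The paper finishes more directly, from the point you had already reached at the end of your second paragraph. Every coordinate $\alpha|_{S_i}$ lies in $C_{\Aut(S_i)}(Q\cap S_i)$, so applying \cref{lem:simple-centralizer} on every factor gives $|C_{\GammaLA}(R)|\leq (|S|/2)^n\leq |A|/2$. That count uses neither transitivity of $L$ on the factors nor the defining property of $\GammaLA$, only injectivity of restriction to $A$, and it is stronger when $n\geq 2$. Your fallback uses the Sylow-centralizer lemma only once and pays for the remaining factors with the full $|\mathrm{Inn}(S_i)|=|S|$, which is just enough.
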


\begin{proof}
Write
$A=S_1\times\cdots\times S_n$, where the $S_i$ are isomorphic finite
nonabelian simple groups.  Restriction gives an embedding
$\GammaLA\hookrightarrow\Aut(A)$.  To see injectivity, suppose that
$\alpha\in\GammaLA$ fixes $A$ pointwise.  For $x\in L$, write
$\alpha(x)=xc$ with $c\in A$.  Comparing $\alpha(xax^{-1})$ with
$\alpha(x)\alpha(a)\alpha(x)^{-1}$ for $a\in A$ shows that
$c\in Z(A)=1$.  Hence $\alpha=1$.

Now
\[
 R\cap A=P_1\times\cdots\times P_n,
 \qquad P_i\in\Syl_2(S_i).
\]
Every $P_i$ is nontrivial.  An automorphism of $A$ which fixes
$R\cap A$ pointwise cannot permute the factors, because a nonidentity
element supported in $P_i$ would be moved to a different support.  By
\cref{lem:simple-centralizer}, restriction to the factors gives
\[
 |C_{\GammaLA}(R)|
 \leq \prod_{i=1}^n |C_{\Aut(S_i)}(P_i)|
 \leq (|S_1|/2)^n
 \leq |A|/2.
\]
\end{proof}

\section{Proof of the main theorem}

\begin{proof}[Proof of \cref{thm:main}]
If $d=0$, every member of $\mathcal H$ is trivial.  The gcd condition then
forces $|G|=1$, so assume $d\geq1$ and put $m=d+1\geq2$.

Suppose, for a contradiction, that the theorem is false.  For this fixed
value of $d$, choose a counterexample $G$ of least order.  By
\cref{lem:quotient}, every proper quotient of $G$ satisfies the same
hypothesis with the same $d$.  Minimality gives
\begin{equation}\label{eq:proper-quotients}
 d(G/N)\leq m
 \qquad(1\neq N\unlhd G),
\end{equation}
whereas $d(G)>m$.

By \cref{prop:augmentation}, $d_G(I_G)\leq m$, so
\cref{prop:presentation-rank} implies $\pr(G)>0$.  If $G$ had a nontrivial
soluble normal subgroup $N$, then \cref{prop:pr} and
\eqref{eq:proper-quotients} would give
$d(G)=d(G/N)\leq m$, a contradiction.  Thus
\begin{equation}\label{eq:no-soluble-normal}
 G\text{ has no nontrivial soluble normal subgroup.}
\end{equation}

Apply \cref{prop:crown}.  We have
\[
 G\cong L_k,
 \qquad k=f_L(m),
\]
where $L$ is monolithic with socle $A$.  The socle $A^k$ of $G$ is a
nontrivial normal subgroup, so \eqref{eq:no-soluble-normal} excludes the
abelian-socle case.  These are the two exhaustive socle branches in
\cref{prop:crown}, so \(A\) is nonabelian.  Also \(L\) is a proper quotient
of \(G\), so \(d(L)\leq m\).  Combining
\cref{prop:crown,prop:probability} gives
\begin{equation}\label{eq:lower-k}
 k
 =1+\frac{P_{L,A}(m)|A|^m}{|\GammaLA|}
 \geq 1+\frac{|A|^m}{2|\GammaLA|}.
\end{equation}

Each simple direct factor of $A$ contains the involution supplied by
Guralnick's theorem, so $A$, and hence $G$, has even order.  By
\cref{lem:sylow-coverage}, there is an $H\in\mathcal H$ containing a Sylow
$2$-subgroup $T$ of $G$.  After padding a generating tuple with identities,
write
\[
 H=\gen{h_1,\ldots,h_d},
 \qquad
 h_i=(h_{i1},\ldots,h_{ik})\in L_k.
\]
For $1\leq j\leq k$, set
\[
 \mathbf h_j=(h_{1j},\ldots,h_{dj})\in L^d.
\]
Let $\rho:L^d\to(L/A)^d$ be the coordinatewise quotient map.  All the tuples
$\mathbf h_j$ have the same image under $\rho$.  They therefore lie in a
common fiber $\Omega$, which has cardinality $|A|^d$.  The group $\GammaLA$
acts componentwise on $\Omega$.

We claim that $\mathbf h_1,\ldots,\mathbf h_k$ belong to distinct
$\GammaLA$-orbits.  Suppose instead that, for distinct $r,s$, there is an
$\alpha\in\GammaLA$ such that
\[
 h_{is}=\alpha(h_{ir})\qquad(1\leq i\leq d).
\]
It follows by evaluating words in the generators that every $h\in H$, written
as $h=(x_1,\ldots,x_k)$, satisfies $x_s=\alpha(x_r)$.  But, by
\cref{lem:sylow-transport}, $T\cap A^k$ is a Sylow $2$-subgroup of $A^k$,
hence is a direct product of $k$ nontrivial Sylow $2$-subgroups, one in each
coordinate.  Since $T\leq H$, choose $h\in T\cap A^k$ nonidentity and
supported only in coordinate $r$.  Then
$x_r\neq1=x_s$, contradicting $x_s=\alpha(x_r)$.  This proves the claim.

Let $T_j$ denote the image of $T$ under the $j$th coordinate projection
$L_k\to L$.  This projection is surjective, so
\cref{lem:sylow-transport} gives $T_j\in\Syl_2(L)$.  Put
$H_j=\gen{h_{1j},\ldots,h_{dj}}$, the image of $H$ under the $j$th coordinate
projection.  The stabilizer of $\mathbf h_j$ in $\GammaLA$ fixes $H_j$
pointwise.  Since $T\leq H$, we have $T_j\leq H_j$.  By
\cref{lem:monolithic-centralizer},
\[
 |\Stab_{\GammaLA}(\mathbf h_j)|\leq |A|/2.
\]
Thus each of the $k$ distinct orbits has at least
$2|\GammaLA|/|A|$ elements.  Packing them into $\Omega$ gives
\begin{equation}\label{eq:upper-k}
 k\frac{2|\GammaLA|}{|A|}\leq |A|^d,
 \qquad\text{and hence}\qquad
 k\leq \frac{|A|^{d+1}}{2|\GammaLA|}
   =\frac{|A|^m}{2|\GammaLA|}.
\end{equation}
The upper bound \eqref{eq:upper-k} contradicts the strict lower bound
\eqref{eq:lower-k}.  This proves $d(G)\leq d+1$.
\end{proof}

\begin{remark}
The sharp point is the comparison between \eqref{eq:lower-k} and
\eqref{eq:upper-k}.  The conditional probability supplies one factor
$1/2$, while the pointwise Sylow-centralizer estimate supplies the matching
factor in the orbit count.  The additional $1$ in the exact critical-crown
formula rules out equality.
\end{remark}

\section*{Research materials}

The manuscript source and source manifest are available in a public
\href{https://github.com/RichieSater/kourovka-21-87}{GitHub repository}.  An
archival snapshot is deposited at Zenodo under DOI
\href{https://doi.org/10.5281/zenodo.21893748}{10.5281/zenodo.21893748}.

\section*{Dependencies and disclosure}

The proof uses only the published results cited above and the arguments given
here; no computer calculation enters it.  The cited results on critical
crowns, conditional generation, and generation of simple groups depend on
the classification of finite simple groups.  A literature search conducted
before submission found no earlier paper asserting this conclusion, but this
does not establish priority.

OpenAI Codex was used for literature organization, proof stress-testing, and
editorial assistance.  The author is responsible for all content and
mathematical claims in this manuscript.

\bibliographystyle{amsplain}
\bibliography{references}

\end{document}